\theoremstyle{theorem}
 \newtheorem{thm}{Theorem}[section]
 \newtheorem{prop}[thm]{Proposition}
 \newtheorem{cor}[thm]{Corollary}
\theoremstyle{remark}
  \newtheorem{rem}{Remark}[section]
 \newtheorem{eg}{Example}[section]
\theoremstyle{definition}
  \newtheorem{defn}{Definition}[section]
\newtheoremstyle{Acknowledgements}
  {}
    {}
     {}
     {}
    {\bfseries}
    {}
     {.5em}
     {\thmname{#1}\thmnumber{ }\thmnote{ (#3)}}
\theoremstyle{Acknowledgements}
\newtheorem{ack}{Acknowledgements}
\newcommand{\done}{\hfill $\triangleleft$}
\newcommand{\FF}{\mathbb{F}}
\newcommand{\CC}{\mathbb{C}}
\newcommand{\kk}{\Bbbk}
\newcommand{\ii}{\mathcal{I}}
\newcommand{\s}{\mathcal{S}_G}
\newcommand{\pp}{\mathfrak{p}}
\newcommand{\kvg}{{\kk[V]^G}}
\newcommand{\kvgplus}{{\kk[V]^G_+}}
\newcommand{\ahat}{\widehat{A}}
\newcommand{\kvghat}{\widehat{\kvg}}
\newcommand{\kvhat}{\widehat{\kv}}
\newcommand{\vhatg}{\widehat{V}/\!\!/G}
\newcommand{\vhat}{\widehat{V}}
\newcommand{\pihat}{\widehat{\pi}}
\newcommand{\kv}{{\kk[V]}}
\newcommand{\vg}{{V/\! \!/G}}
\newcommand{\kvgbar}{\overline{\kk}[\overline{V}]^G}
\newcommand{\vbar}{\overline{V}}
\newcommand{\kbar}{\overline{\kk}}
\newcommand{\kvbar}{\overline{\kk}[\overline{V}]}
\newcommand{\spec}{\mathrm{Spec}}
\newcommand{\red}{\mathrm{red}}
\newcommand{\codim}{\mathrm{codim}}
\author{Emilie~Dufresne}
\address{Department of Mathematics and Statistics\\
                 Queen's University\\
                 Jeffery Hall, University Avenue\\
                 Kingston, ON, Canada\\
                 K7L 3N6}
\email{emilie@mast.queensu.ca}
\title[Separating Invariants and Finite Reflection Groups]{ Separating Invariants \\and Finite Reflection Groups}
\begin{document}

\bibliographystyle{plain}


\begin{abstract}

 A separating algebra is, roughly speaking, a subalgebra of the ring of invariants whose elements distinguish between any two orbits that can be distinguished using invariants.  In this paper, we introduce  a geometric notion of separating algebra. This allows us to prove that only groups generated by reflections may have  polynomial separating algebras, and only groups generated by bireflections may have complete intersection separating algebras.
\end{abstract}


\maketitle


\section{Introduction}

The study of separating invariants is a new trend in Invariant Theory initiated by Derksen and Kemper \cite{hd-gk:cit,gk:cirgpc}. It returns to the roots of Invariant Theory: using invariants to distinguish between the orbits of a group action on some geometric or algebraic space. Roughly speaking, a separating algebra is a subalgebra of the ring of invariants whose elements distinguish between any two orbits that can be distinguished using invariants. A separating set need not generate the ring of invariants. Separating algebras are often much better behaved than the ring of invariants. In contrast with the ring of invariants, a finitely generated separating algebra always exists, with no restrictions on the group (Theorem~2.3.15 in \cite{hd-gk:cit}); the polarization of separating invariants yields a separating set (Theorem 1.4 in \cite{jd-gk-dw:psi}); for finite groups, the Noether bound holds for separating invariants (Corollary~3.9.14 in \cite{hd-gk:cit}). 

In this paper, we introduce the notion of geometric separating algebra, a notion of separating algebra stable under extensions of the base field. But more importantly, we give two geometric formulations of this notion. These provide the keys to relating the structure of geometric separating algebras to the geometry of the representation.

Let $G$ be a group, and let $V$ be a finite dimensional representation of $G$ over a field $\kk$. Our first two results link the existence of polynomial geometric separating algebras to the presence of \emph{reflections}, elements of $G$ fixing a subspace of codimension 1  in $V$:

\begin{thm}\label{polysepalg->refngr}
Let $G$ be a finite group. If there exists a geometric separating algebra which is a polynomial ring, then the action of $G$  on $V$ is generated by  reflections.\end{thm}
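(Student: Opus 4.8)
The plan is to translate the hypothesis into geometry. A polynomial separating algebra $A=\kk[f_1,\dots ,f_r]$ gives a morphism $\phi\colon V\to\spec A$ through which the quotient map $\pi\colon V\to\vg$ factors, say $\phi=\psi\circ\pi$ with $\psi\colon\vg\to\spec A$. The separating hypothesis forces $\psi$ to be very close to an isomorphism, and the endgame is to conclude that $\kk[V]^G$ is itself a polynomial ring and then to invoke the Shephard--Todd--Chevalley--Serre theorem, which in all characteristics forbids $\kk[V]^G$ from being polynomial unless $G$ is generated by reflections.

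First come two routine reductions. The hypothesis is stable under extension of $\kk$ --- this is exactly the force of the word ``geometric'' --- and the subgroup of $G$ generated by reflections is unchanged by field extension, so I may assume $\kk$ algebraically closed; replacing $G$ by its image in $\mathrm{GL}(V)$ changes neither the invariants nor the reflections, so I may also assume the action faithful. Now set up the basic structure: since $A$ separates orbits and $G$ is finite, any two points in a common fibre of $\phi$ lie in one (necessarily closed) $G$-orbit, so the fibres of $\phi$ are single orbits; hence $\phi$ is quasi-finite and surjective, $\dim\spec A=\dim V=:n$, and $\spec A\cong\mathbb{A}^n$ is smooth. A short argument with Zariski's Main Theorem, using that a polynomial ring has no non-constant units, shows that $\kk[V]$ is a finite $A$-module; hence so is $\kk[V]^G$, the morphism $\psi$ is finite, and being injective on points (separating) and dominant it is bijective.

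The heart of the matter is then quick whenever the extension $\kk(V)^G/\mathrm{Frac}(A)$ is separable --- in particular always in characteristic $0$. A finite, bijective, generically separable morphism onto a normal variety is birational, hence an isomorphism; so $\psi$ is an isomorphism, $\kk[V]^G\cong A$ is a polynomial ring, and Shephard--Todd--Chevalley--Serre yields that $G$ is generated by reflections.

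The genuinely modular case --- where $\kk(V)^G/\mathrm{Frac}(A)$ is a non-trivial purely inseparable extension --- is where the real work, and the main obstacle, lies: there $\psi$ is only a universal homeomorphism, $A\subsetneq\kk[V]^G$, and $\kk[V]^G$ need not be polynomial, so the reduction above collapses. To handle it I would combine the two geometric reformulations of the separating property with (i) purity of the branch locus (Zariski--Nagata) applied to the finite cover $\phi\colon V\to\mathbb{A}^n$ of the regular scheme $\mathbb{A}^n$ by the smooth scheme $V$; (ii) the flatness of $\kk[V]$ as an $A$-module, $\kk[V]$ being a Cohen--Macaulay $A$-module of maximal dimension over the regular ring $A$; and (iii) the standard fact that the codimension-one ramification of $\pi\colon V\to\vg$ is supported exactly on the reflection hyperplanes, so that a reflection-free $G$ renders $\pi$ \'etale in codimension one. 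The difficulty --- and the step I expect to be decisive --- is that purity requires generic separability, which is precisely what fails here; one must first dispose of the purely inseparable part of $\psi$ (equivalently, reduce the inseparability degree of $\kk(V)^G/\mathrm{Frac}(A)$ while keeping a polynomial geometric separating algebra in play) before the purity--ramification argument can be brought to bear.
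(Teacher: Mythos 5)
Your argument is complete only in the generically separable case (in particular in characteristic $0$), where it correctly recovers the theorem by showing $\psi:\vg\to\spec A$ is a finite bijective birational morphism onto a normal scheme, hence an isomorphism, and then quoting Serre. The gap is the inseparable case, which you yourself flag as unresolved --- and it is worse than a missing lemma, because your intended endgame (force $\kvg$ to be a polynomial ring and then apply Serre) is provably unreachable in general. Example~\ref{MyEg} of the paper exhibits a modular representation whose ring of invariants is a non-polynomial hypersurface while a polynomial geometric separating algebra exists; this is exactly why Theorem~\ref{polysepalg->refngr} is a \emph{strict} generalization of Serre's result, and any proof routed through ``$\kvg$ is polynomial'' must fail on it. Note also that inseparability of $\kk(V)^G/\mathrm{Frac}(A)$ is not confined to the modular case: in any positive characteristic one may replace generators of a separating algebra by their $p$-th powers without losing the separating property, so your ``quick'' case does not even cover all of characteristic $p$ with $p\nmid|G|$. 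Finally, the purity-of-the-branch-locus strategy you sketch for the hard case is blocked by precisely the generic inseparability you would need it to handle, as you acknowledge.

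The paper's proof sidesteps all of this by never trying to compare $A$ with $\kvg$ as rings. Since $A$ is polynomial of dimension $n$ (Remark~\ref{samedim}), the ideal $(\delta(A))$ in the $2n$-dimensional ring $\kv\otimes_\kk\kv$ is generated by $n$ elements, while the separating scheme $\s=(V\times_WV)_\red$ is the union of the $n$-dimensional graphs $H_\sigma$ of the elements $\sigma\in G$; hence $V\times_WV$ is a complete intersection, so Cohen--Macaulay, and Hartshorne's connectedness theorem makes $\s$ connected in codimension $1$. Because $\dim(H_\sigma\cap H_\tau)$ equals the dimension of the fixed space of $\tau^{-1}\sigma$ (Proposition~\ref{SG}), a chain of components from $H_1$ to $H_\sigma$ meeting in codimension $1$ writes $\sigma$ as a product of reflections. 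This argument is characteristic-free and entirely avoids the separability issue on which your proposal founders; if you want to salvage your approach, you would need an independent way to extract reflections from a purely inseparable universal homeomorphism $\psi$, and no such mechanism is in sight.
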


Our method provides a new proof for the result of Serre~\cite{jps:gfaalr} which established that only reflection groups may have polynomial rings of invariants. In Example~\ref{MyEg}  the ring of invariants is not polynomial, but  we give a polynomial geometric separating algebra. Thus, Theorem~\ref{polysepalg->refngr} is a strict generalization of Serre's result.

An interesting consequence is a characterization, when the order of the group is invertible in the base field, of the existence of polynomial geometric separating algebras. It generalizes the well-known result of  Shephard and Todd~\cite{gcs-jat:furg}, Chevalley~\cite{cc:ifggr}, Serre, and Clark and Ewing~\cite{ac-je:trpacr} which states that when the order of the group is invertible in the base field, the ring of invariants is polynomial if and only if the group is generated by reflections.  
 \begin{cor}\label{polysepalg<->refngr}
Let $G$ be a finite group. If the characteristic of $\kk$ does not divide the order of $G$, then there exists a geometric separating algebra which is a polynomial ring if and only if the action of $G$ on $V$ is generated by reflections.
\end{cor}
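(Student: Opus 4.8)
The plan is to prove the two implications separately, noting that only the ``if'' direction uses the hypothesis that $\mathrm{char}(\kk)$ does not divide $|G|$. The implication ``$\kvg$, or some polynomial geometric separating algebra, exists $\Rightarrow$ $G$ acts on $V$ as a reflection group'' is exactly Theorem~\ref{polysepalg->refngr}, which carries no assumption on the characteristic; so that half of Corollary~\ref{polysepalg<->refngr} requires nothing new.

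For the converse, suppose $G$ acts on $V$ as a reflection group and $\mathrm{char}(\kk) \nmid |G|$. The classical theorem of Shephard and Todd~\cite{gcs-jat:furg}, Chevalley~\cite{cc:ifggr}, Serre~\cite{jps:gfaalr}, and Clark and Ewing~\cite{ac-je:trpacr} then gives that the full ring of invariants $\kvg$ is a polynomial ring. It therefore suffices to show that $\kvg$ itself is a \emph{geometric} separating algebra. The ring of invariants is trivially a separating algebra (it separates every pair of orbits that invariants can separate), so the only thing to check is the stability under base extension built into the notion of geometric separating algebra: that the subalgebra of $\kvbar$ generated by the images of $\kvg$ still separates $G$-orbits on $\vbar$. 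Since $|G|$ is invertible in $\kk$, the Reynolds operator $\frac{1}{|G|}\sum_{g \in G} g$ is defined over $\kk$ and is compatible with the flat base change $\kk \to \overline{\kk}$; hence $\kvbar^G = \overline{\kk} \otimes_\kk \kvg = \kvgbar$. Thus the extension of $\kvg$ to $\overline{\kk}$ is again the full ring of invariants, which separates all $G$-orbits, and $\kvg$ is a geometric separating algebra. This establishes the ``if'' direction and finishes the proof.

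I do not expect a serious obstacle here, as the argument mostly repackages known results; the one step deserving care is the base-change identity $\kvbar^G = \overline{\kk} \otimes_\kk \kvg$, together with confirming that this matches whichever of the equivalent formulations of ``geometric separating algebra'' introduced earlier in the paper is used. One should also be slightly careful to observe that $\overline{\kk} \otimes_\kk \kvg$ injects into $\kvbar$ (flatness of the field extension) so that ``the images of the invariants'' is unambiguous, but this is routine.
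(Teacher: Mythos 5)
Your proof is correct and follows essentially the same route as the paper: one direction is Theorem~\ref{polysepalg->refngr}, and the other combines the Shephard--Todd/Chevalley/Serre/Clark--Ewing theorem with the fact that $\kvg$ is a geometric separating algebra. The only superfluous step is the Reynolds-operator/base-change argument: by Definition~\ref{GeomSepAlgDefn} the separating condition is tested against $\kvg$ itself (evaluated on points of $\vbar$), so $\kvg$ is \emph{tautologically} a geometric separating algebra and no identification of $\kvgbar$ with $\kbar\otimes_\kk\kvg$ is needed.
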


Corollary \ref{polysepalg<->refngr} is new, even in characteristic zero. A version of it is an easy consequence of Theorem 1.6 of  \cite{gk:cirgpc} (or Theorem 2.3.12 of \cite{hd-gk:cit}), but it requires the additional assumption that the polynomial separating algebra be graded and the base field algebraically closed.

Our third result generalizes a result of Gordeev \cite{vk-kw:flgrici}, and Kac and Watanabe  \cite{vk-kw:flgrici}. They proved that when the ring of invariants is a complete intersection, the group must be generated by \emph{bireflections}, that is, elements of $G$ fixing a subspace of codimension 2 in $V$. 
 
\begin{thm}\label{CISepAlg->bir}
Let $G$ be a finite group. If there exists a graded geometric separating algebra which is a complete intersection, then the action of $G$ on $V$ is generated by bireflections.
\end{thm}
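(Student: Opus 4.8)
The plan is to adapt the proof of Theorem~\ref{polysepalg->refngr}, substituting the complete intersection property and a codimension-$2$ purity statement for the role played there by smoothness and Zariski--Nagata purity of the branch locus. As a preliminary reduction it suffices to treat the case $\kk = \kbar$: the complete intersection property survives the base change $\kk \hookrightarrow \kbar$, and the geometric separating property is, by definition, stable under field extension. Translating into geometry, the inclusion $A \hookrightarrow \kv$ of a graded geometric separating algebra corresponds to a finite morphism $\pi\colon V \to X := \spec A$ whose fibres are exactly the $G$-orbits --- equivalently, the induced morphism $\vg \to X$ is finite and bijective; here $X$ is the affine cone $\spec A$ with vertex $o$. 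Let $H \le G$ be the subgroup generated by all bireflections of $G$ on $V$; it is a normal subgroup, since conjugates of bireflections are bireflections. We must show $H = G$, so assume for contradiction that $\Gamma := G/H \ne 1$, acting on $Y := V/\!\!/H$ with $Y/\Gamma = \vg$.

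First I would analyse the fixed loci of $\Gamma$ on $Y$: for each $1 \ne \gamma \in \Gamma$ one should have $\codim_Y Y^\gamma \ge 3$, so that the quotient morphism $Y \to \vg$ is unramified outside a closed subset of codimension $\ge 3$. The proof of this claim goes by contradiction: if $Y^\gamma$ had codimension $\le 2$, then lifting a point lying over its generic point back to $V$ and using that $H$-invariants separate $H$-orbits --- together with the local normal form of the action there --- one produces an element of $G\setminus H$ whose fixed subspace in $V$ has codimension $\le 2$, i.e.\ a bireflection outside $H$, contradicting the choice of $H$.

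The second step invokes the complete intersection hypothesis. Since $A$ is a graded complete intersection, $X$ is a complete intersection, hence Cohen--Macaulay, and it is normal (it is regular in codimension $1$, as one checks along the ``reflection'' hypersurfaces exactly as in the proof of Theorem~\ref{polysepalg->refngr}). The key input is the purity result for complete intersections underlying the work of Kac--Watanabe and Gordeev \cite{vk-kw:flgrici}: a finite cover of a normal complete intersection by a normal variety that is unramified in codimension $\le 2$ must be unramified. Applying this along the chain $Y \to \vg \to X$ --- after checking that the finite bijective map $\vg \to X$ contributes no ramification where it matters, and keeping track instead of the finite morphism $\pi\colon V \to X$ itself --- one concludes that $Y \to \vg$ is étale; but $\Gamma$ fixes the cone vertex of $Y$, and an étale quotient map admits no fixed point unless the group is trivial. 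Hence $\Gamma = 1$ and $H = G$.

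The heart of the difficulty is the second step. The complete intersection property is not preserved by finite bijective morphisms or by normalization, so one must be careful to feed the hypothesis into the Kac--Watanabe/Gordeev machinery on the right space --- ultimately working with $\pi\colon V \to X$ rather than only with $\vg$ --- and to verify that their codimension-$2$ purity argument, whose core is a defect computation for finite covers of complete intersections, carries over from rings of invariants to arbitrary graded complete intersection separating algebras. In positive characteristic the further wrinkle is that $\vg \to X$ can be a genuinely nontrivial finite bijective (for instance purely inseparable) morphism, so the ramification bookkeeping along $V \to Y \to \vg \to X$ must still isolate bireflections. The codimension-$3$ estimate of the first step, carried out in the singular space $Y = V/\!\!/H$ and in arbitrary characteristic (where one cannot appeal directly to étale slices), is a secondary but nontrivial point.
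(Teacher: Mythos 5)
Your overall strategy belongs to the same Kac--Watanabe circle of ideas as the paper's proof, but reorganized: you form the subgroup $H$ generated by bireflections and try to show that the residual cover $Y=V/\!\!/H\rightarrow \vg$ is \'etale by a purity-of-the-branch-locus argument, whereas the paper keeps an \'etale fundamental group formulation throughout. Your first step is sound: if $\gamma=gH\neq 1$ fixes a point of $Y$, that point lifts to a point of $V^{hg}$ for some $h\in H$, and $\codim V^{hg}\leq 2$ would force $hg$, and hence $g$, into $H$; since $V\rightarrow Y$ is finite, this gives $\codim_Y Y^\gamma\geq 3$. This is essentially the content of Lemma~2 of \cite{vk-kw:flgrici} unpacked into the quotient-by-$H$ language.

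The genuine gap is exactly where you locate ``the heart of the difficulty,'' and you do not close it. The purity statement you invoke lives on the complete intersection $X=\spec(A)$, but the cover you need to trivialize is $Y\rightarrow\vg$, and the intermediate map $\vg\rightarrow X$ is only radicial: in characteristic $p$ it can be purely inseparable, hence ramified everywhere, so the composite $Y\rightarrow X$ is not ``unramified in codimension $\leq 2$'' and no branch-locus form of purity applies to it. The mechanism that sees through this map is not ramification bookkeeping but the invariance of the \'etale site under finite surjective radicial morphisms (Theorem 4.10 of \cite{ag:sga1b}); this is why the paper phrases everything in terms of simple connectedness, completes at the graded maximal ideals so that the ambient schemes are simply connected by Hensel's lemma (your global affine cones need not be, particularly in characteristic $p$), applies Lemma~1 of \cite{vk-kw:flgrici} to get simple connectedness of the complete local complete intersection minus a codimension-$3$ subset, and only then transfers this across the radicial map. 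Your proposal gestures at the problem but offers no substitute for this step. Two smaller defects: the normality of $\spec(A)$ is asserted via a check ``along the reflection hypersurfaces as in Theorem~\ref{polysepalg->refngr},'' but that proof establishes connectedness in codimension $1$ via Hartshorne's theorem, not regularity in codimension $1$, and normality of $\spec(A)$ is neither proved nor needed in the paper's argument; and the purity input is really Grothendieck's theorem for \emph{complete local} complete intersection rings, so the completion step you omit is not optional.
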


Examples where there is a geometric separating algebra which is a complete intersection abound. For example, for every  2-dimensional representations of finite abelian groups over $\CC$, there is  a geometric separating algebra which is a hypersurface (see \cite{ed:si}). The abundance of similarly well-behaved examples led to the question of whether there always existed such nice separating algebras (see the introduction of \cite{gk:si}).  Example \ref{DerksenEg}, due to Harm Derksen, shows that, in general, we can not expect an hypersurface geometric separating algebra to exist. His example, however, is of a representation of $\CC^*$, and the technique he uses does not appear to be adaptable to finite groups. Thus, the question remained open for finite groups. With Theorem~\ref{CISepAlg->bir}, we provide some answers: if $G$ is not generated by bireflections, no graded geometric separating algebra is a complete intersection, and in particular,  no graded geometric separating algebra is a hypersurface.

Section 2 of this paper introduces the notion of geometric separating algebra, and more importantly, its two geometric formulations. In Section 3, we prove Theorems  \ref{polysepalg->refngr} and \ref{polysepalg<->refngr}, and then discuss briefly some interesting examples, and questions arising from them. In Section 4, we prove Theorem \ref{CISepAlg->bir}, and end with Harm Derksen's example.

\begin{ack}
I thank Harm Derksen for giving permission to include his example (Example \ref{DerksenEg}). I also thank David Wehlau for his supervision during my graduate studies, and Mike Roth and Greg Smith for helpful conversations.
\end{ack}


\section{Geometric Separating Invariants}\label{defn}

This section establishes the definitions and results needed throughout this text. It introduces the notion of geometric separating algebra and its two geometric formulations.
Consider a linear algebraic group $G$, and  a $n$-dimensional representation $V$ of $G$  over a field $\kk$. We write $\kv$ for the symmetric algebra on the vector space dual of $V$. The polynomial ring $\kv$ has the standard grading.  The action of the group $G$ on $V$ induces an action on $\kv$. Specifically, for a polynomial $f$,  the action of an element $\sigma$ of $G$,  is given by $(\sigma\cdot f)(u)=f(\sigma^{-1}\cdot u)$, for $u$ in $V$. We let $\kvg$ denote the ring formed by the elements of $\kv$ fixed by the $G$-action. Since the group action preserves degree, $\kvg$ is a graded subalgebra of $\kv$.

By definition, elements of $\kvg$ are constant on $G$-orbits. Thus, if an invariant $f$ takes distinct values on elements $u$ and $v$ of $V$, then we know that these elements belong to distinct orbits, that is, $f$ \emph{separates} $u$ and $v$. A naive definition for a separating set would be to require that it separates elements $u$ and $v$ whenever they belong to distinct orbits. The ring of invariants, however, often does not distinguish the orbits (see, for example, Section 2.3.1 in \cite{hd-gk:cit}). Derksen and Kemper~ \cite{hd-gk:cit} and \cite{gk:cirgpc}  define a \emph{separating set} as a subset $E$ of $\kvg$ such that for all $u,v$ in $V$, if there exists $f$ in $\kvg$ with $f(u)\neq f(v)$, then there exists $h$ in $E$ with $h(u)\neq h(v)$.

This definition yields interesting results: \cite{gk:cirgpc} uses the computation of a separating set as an intermediate step in the computation of a generating set for the invariants of reductive groups in positive characteristic; \cite{md:tsi,jd-gk-dw:psi} show the polarization of separating sets yields separating sets; \cite{gk:si} offers a generalization of separating sets to more general rings of functions. Many of these results, however, require the base field to be algebraically closed;  this notion of separating algebra behaves rather differently over non algebraically closed fields, and its behaviour over finite fields diverges even more from the situation over algebraically closed fields. The definition we suggest in the present paper is stable under extensions of the base field.

Let $\kbar$ be an algebraic closure of the field $\kk$, and let $\vbar=V\otimes_\kk{\kbar}$, then $\kv\subset\kvbar$, and so any  $f$ in $\kv$ can be considered as a function $\vbar\rightarrow{\kbar}$. Moreover, we can extend the action of $G$ on $V$ to an action of $G$ on $\vbar$. Thus, we can view $\kvg\subset\kvgbar$.

\begin{defn}\label{GeomSepAlgDefn}
A subset $E$ of $\kvg$ is a \emph{geometric separating set} if, for all $u$ and $v$ in $\vbar$,  the two following equivalent statements hold:
\begin{itemize}
	\item if there exists $f$ in $\kvg$ such that $f(u)\neq f(v)$, then there exists $h$ in $E$ such that $h(u)\neq h(v)$;
	\item $f(u)=f(v)$, for all $f$ in $\kvg$ if and only if $h(u)=h(v)$ for all $h$ in $E$.
\end{itemize}
A subalgebra $A\subset\kvg$ satisfying these conditions is called a \emph{geometric separating algebra}. If a subalgebra of $\kvg$ is generated by a geometric separating set, then it is a geometric separating algebra.\end{defn}

The adjective ``Geometric'' is often used to describe similar constructions. The strength of this new definition lies in the two geometric formulations presented below.  In fact, for most of the results concerning separating invariants found in the literature, a geometric separating invariants version holds, often removing the requirement on $\kk$ to be algebraically closed (see \cite{ed:si}).



Our first geometric formulation of the definition of geometric separating algebra is a generalization to general groups and fields of some ideas of Kemper   for reductive groups over algebraically closed fields (Section 2 of \cite{gk:cirgpc}). 
We write $V$ for the affine scheme corresponding to $\kv$, $\vg$ for the affine scheme corresponding to $\kvg$, and $\pi$ for the morphism  from $V$ to $\vg$ corresponding to the inclusion $\kvg\subset\kv$. 

\begin{defn} 
The separating scheme $\s$ is the unique reduced scheme having the same underlying topological space as the product $V\times_\vg V$, that is, $\s:=(V\times_\vg V)_\red$.
\end{defn}

The separating scheme can be used to detect when a subalgebra $A\subset\kvg$ is a geometric separating algebra:

\begin{thm}\label{SepDefn}
Let $A\subset\kvg$ be a subalgebra, then the following statements are equivalent:

\begin{enumerate}

\item \label{defnA} A is a geometric separating algebra;

\item \label{geom} if $W=\spec(A)$, then the natural morphism
$\s\rightarrow(V\times_W V)_\red$
is an isomorphism;

\item\label{alge} 
if $\delta$ denotes the map $\delta: \kv \longrightarrow \kv\otimes_\kk \kv$ sending an element $f$ of $\kv$ to $f\otimes1-1\otimes f$,
then the ideals $(\delta(A))$ and $(\delta(\kvg))$ have the same radical in the ring $\kv\otimes_\kk\kv$, i.e.,
\[\sqrt{\delta(A)}=\sqrt{\delta(\kvg)};\]\end{enumerate}

\begin{proof}
First, we prove (\ref{geom}) and (\ref{alge}) are equivalent. As $V$, $\vg$, and $W$ are affine schemes, we have
$V\times_\vg V=\spec(\kv\otimes_\kvg\kv) $,
and
$V\times_W V=\spec(\kv\otimes_A\kv)$.
For any subalgebra $B\subset\kv$,
\[\kv\otimes_B\kv=\frac{\kv\otimes_\kk\kv}{(\delta(B))}.\]
Thus, (\ref{geom}) is equivalent to saying the $\kk$-algebra homomorphism
\[\frac{\kv\otimes_\kk\kv}{\sqrt{\delta(A)}}\rightarrow \frac{\kv\otimes_\kk\kv}{\sqrt{\delta(\kvg)}}\]
is an isomorphism, that is, $\sqrt{\delta(A)}=\sqrt{\delta(\kvg)}$.

We now prove (\ref{defnA}) and (\ref{alge}) are equivalent. If $A$ is a geometric separating algebra, then for any $u$ and $v$ in $\overline{V}$, $f(u)=f(v)$ for all $f$ in $\kvg$ if and only if $h(u)=h(v)$ for all $h$ in $A$. If  $\ii_{\vbar^2}(u,v)$ denotes the maximal ideal of $\kvbar\otimes_{\kbar}\kvbar$ corresponding to the point $(u,v)$ of $\vbar\times\vbar$, then we can rewrite this statement as:
\[\ii_{\vbar^2}(u,v)\cap\left(\kv\otimes_\kk\kv\right)\supset\delta(\kvg)\]
if and only if
\[\ii_{\vbar^2}(u,v)\cap\left(\kv\otimes_\kk\kv\right)\supset\delta(A).\]

Since the maximal ideals of $\kv\otimes_\kk\kv$ are in bijection with Galois orbits of the maximal ideals of $\kbar[\vbar]\otimes_{\kbar} \kbar[\vbar]$, the maximal ideals of $\kv\otimes_\kk\kv$ are exactly the primes of the form
$$\ii_{\vbar^2}(u,v)\cap\left(\kv\otimes_\kk\kv\right).$$
As $\kv\otimes_\kk\kv$ is a finitely generated $\kk$-algebra, the radical of an ideal $I$ is given by the intersection of all maximal ideals containing $I$ (Theorem 5.5 of \cite{hm:crt}). 
Therefore, $\sqrt{\delta(\kvg)}=\sqrt{\delta(A)}$.
\end{proof}\end{thm}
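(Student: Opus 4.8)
The plan is to establish the chain of equivalences $(\ref{geom})\Leftrightarrow(\ref{alge})$ and $(\ref{defnA})\Leftrightarrow(\ref{alge})$, since $(\ref{alge})$ is the most algebraically tractable of the three and serves as a convenient hub. The whole argument rests on one computational observation: for any subalgebra $B\subset\kv$, the fibered product $V\times_{\spec B}V$ has coordinate ring $\kv\otimes_B\kv$, and this is naturally isomorphic to $(\kv\otimes_\kk\kv)/(\delta(B))$, where $\delta(f)=f\otimes 1-1\otimes f$. This is because $\kv\otimes_B\kv$ is the quotient of $\kv\otimes_\kk\kv$ by the relations $bf\otimes g = f\otimes bg$ for $b\in B$, and these relations are generated (as an ideal) by the elements $b\otimes 1-1\otimes b$, i.e.\ by $\delta(B)$.

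For $(\ref{geom})\Leftrightarrow(\ref{alge})$: applying the observation with $B=A$ and with $B=\kvg$, and passing to reduced schemes (which on the ring side means quotienting by the nilradical, i.e.\ replacing the defining ideals by their radicals), the natural morphism $\s\to(V\times_WV)_\red$ corresponds to the ring map $(\kv\otimes_\kk\kv)/\sqrt{\delta(A)}\to(\kv\otimes_\kk\kv)/\sqrt{\delta(\kvg)}$. Since $A\subset\kvg$ gives $\delta(A)\subset\delta(\kvg)$, this map is always a surjection, and it is an isomorphism precisely when the two radical ideals coincide. I would note here that the morphism goes in this direction because $\s=(V\times_\vg V)_\red$ maps to $(V\times_WV)_\red$ via the map induced by $A\subset\kvg$.

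For $(\ref{defnA})\Leftrightarrow(\ref{alge})$: the key point is to reinterpret the pointwise separating condition over $\vbar$ in terms of maximal ideals. A pair $(u,v)\in\vbar\times\vbar$ satisfies $f(u)=f(v)$ for all $f\in\kvg$ exactly when $\delta(\kvg)$ is contained in the maximal ideal $\ii_{\vbar^2}(u,v)$ of $\kvbar\otimes_\kbar\kvbar$; intersecting with $\kv\otimes_\kk\kv$, this says $\delta(\kvg)\subset\ii_{\vbar^2}(u,v)\cap(\kv\otimes_\kk\kv)$, and similarly for $A$. So $A$ is a geometric separating algebra iff for every such pair the containment of $\delta(\kvg)$ is equivalent to the containment of $\delta(A)$ in the corresponding ideal of $\kv\otimes_\kk\kv$. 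Now the crucial input is that the ideals of $\kv\otimes_\kk\kv$ arising as $\ii_{\vbar^2}(u,v)\cap(\kv\otimes_\kk\kv)$ are exactly the maximal ideals of $\kv\otimes_\kk\kv$ — this follows from the fact that maximal ideals of a ring descend along $\kk\subset\kbar$ to maximal ideals, the fibers being Galois orbits — together with the Nullstellensatz-type statement that in a finitely generated $\kk$-algebra the radical of an ideal equals the intersection of the maximal ideals containing it. Applying this to $\delta(A)$ and to $\delta(\kvg)$ yields that the two radicals agree.

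The step I expect to require the most care is the identification of the maximal ideals of $\kv\otimes_\kk\kv$ with the ideals of the form $\ii_{\vbar^2}(u,v)\cap(\kv\otimes_\kk\kv)$: one must check that $\kv\otimes_\kk\kv$ is indeed a finitely generated $\kk$-algebra (it is, as a tensor product of two such), that contraction of a maximal ideal along the integral extension $\kv\otimes_\kk\kv\hookrightarrow\kvbar\otimes_\kbar\kvbar$ is maximal and that every maximal ideal of the smaller ring arises this way (lying-over together with the Galois-orbit description of the fiber), and finally that the condition ``$f(u)=f(v)$ for all $f$'' really does translate into the stated ideal containment after intersecting down to $\kv\otimes_\kk\kv$. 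Once this dictionary is in place, both equivalences fall out from elementary commutative algebra.
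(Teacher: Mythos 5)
Your proposal is correct and follows essentially the same route as the paper: the identity $\kv\otimes_B\kv\cong(\kv\otimes_\kk\kv)/(\delta(B))$ to get $(2)\Leftrightarrow(3)$, and then the translation of the separating condition into containments of $\delta(A)$ and $\delta(\kvg)$ in the maximal ideals $\ii_{\vbar^2}(u,v)\cap(\kv\otimes_\kk\kv)$, combined with the Galois-orbit description of those maximal ideals and the Jacobson property of finitely generated $\kk$-algebras, to get $(1)\Leftrightarrow(3)$. The extra care you flag (surjectivity and direction of the induced map, lying-over along $\kv\otimes_\kk\kv\hookrightarrow\kvbar\otimes_\kbar\kvbar$) is exactly the content the paper relies on implicitly.
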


\begin{rem}
The proof of Theorem \ref{SepDefn} implies that a subset $E\subset\kvg$ is a geometric separating set if and only if  $\sqrt{\delta(\kvg)}=\sqrt{\delta(E)}$.
\end{rem}


Under an additional hypothesis, we obtain another geometric formulation of geometric separation which involves the notion of radicial morphism (Definition 3.5.4 in \cite{ag:egai}). A map of schemes $f:X\rightarrow Y$ is \emph{radicial} if for any field $\FF$,  the corresponding map of $\FF$-points is injective.

\begin{thm}\label{SepDefn2}
If $G$ is reductive, then a subalgebra $A\subset\kvg$ is a geometric separating algebra if and only if the morphism of schemes $\theta:\vg\rightarrow W=\spec(A)$ corresponding to the inclusion $A\subset\kvg$ is a radicial morphism.
\begin{proof}

 We write $\gamma$ for the morphism of schemes corresponding to the inclusion  $\kv\subset\kvbar$. By definition, a subalgebra $A\subset\kvg$ is a geometric separating algebra if and only if for $u$ and $v$ in $\vbar$, having $\theta(\pi(\gamma(u)))=\theta(\pi(\gamma(v)))$ implies that $\pi(\gamma(u))=\pi(\gamma(v))$. In other words, $A$ is a geometric separating algebra if and only if $\theta$ is injective on $\kbar$-points in the image of $\vbar$. 
 
 On the other hand, since $G$ is reductive, $\pi$ is surjective (Lemma~1.3 in \cite{gk:aclrgi}), and any map $\spec(\kbar)\rightarrow\vg$ factors through $V$. Since $\vbar\rightarrow V$ is also surjective, $\spec(\kbar)\rightarrow V$ factors through $\vbar$. Thus, all $\kbar$-points of $\vg$ are in the image of $\vbar$. Therefore, $A$ is a geometric separating algebra if and only if $\theta$ is injective on all $\kbar$-points.  But since $V$ and $W$ are of finite type over $\kbar$, by Propositions 1.8.4 and 1.8.7.1 of \cite{ag:egaiva} injectivity on $\kbar$-points is equivalent to injectivity on any $\mathbb{F}$-points. It follows that $A$ is a geometric separating algebra if and only if $\theta$ is radicial.\end{proof}\end{thm}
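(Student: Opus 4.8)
The plan is to express both directions of the equivalence purely in terms of $\kbar$-points and then match them, with reductivity entering at exactly one place.

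First I would set up a translation between the pointwise Definition~\ref{GeomSepAlgDefn} and the morphism $\theta$. Write $\gamma\colon\vbar\to V$ for the morphism induced by $\kv\subset\kvbar$. A point $u\in\vbar$ is the same thing as a $\kk$-morphism $\spec\kbar\to V$, and at the level of rings the composite $\theta\circ\pi\circ\gamma$ at $u$ is the homomorphism $A\to\kbar$, $h\mapsto h(u)$, while $\pi\circ\gamma$ at $u$ is $f\mapsto f(u)$ on $\kvg$. Hence, for $u,v\in\vbar$, one has $f(u)=f(v)$ for all $f\in\kvg$ if and only if $\pi(\gamma(u))=\pi(\gamma(v))$ as $\kbar$-points of $\vg$, and similarly with $A$ and $W$. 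Since $\theta$ is a morphism, $\pi(\gamma(u))=\pi(\gamma(v))$ automatically forces $\theta(\pi(\gamma(u)))=\theta(\pi(\gamma(v)))$, so Definition~\ref{GeomSepAlgDefn} reduces to the converse implication. In other words, $A$ is a geometric separating algebra if and only if $\theta$ is injective on the set of $\kbar$-points of $\vg$ lying in the image of $\pi\circ\gamma$.

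The next, and main, step is to use reductivity to drop the restriction ``in the image of $\pi\circ\gamma$''. Since $G$ is reductive, $\pi\colon V\to\vg$ is surjective (Lemma~1.3 in \cite{gk:aclrgi}). Given a $\kbar$-point $P\colon\spec\kbar\to\vg$ with image $p$, the residue field $\kappa(p)$ embeds in $\kbar$, hence is algebraic over $\kk$; the fibre $\pi^{-1}(p)$ is nonempty and of finite type over $\kappa(p)$, so it has a closed point $q$, for which $\kappa(q)$ is finite over $\kappa(p)$ by the Nullstellensatz, so $\kappa(q)$ is again algebraic over $\kk$. Extending the embedding $\kappa(p)\hookrightarrow\kbar$ to an embedding $\kappa(q)\hookrightarrow\kbar$ produces a $\kk$-morphism $\spec\kbar\to V$ lifting $P$, that is, a point of $\vbar$ mapping to $P$ under $\pi\circ\gamma$. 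Therefore every $\kbar$-point of $\vg$ lies in the image of $\pi\circ\gamma$, and the previous step sharpens to: $A$ is a geometric separating algebra if and only if $\theta$ is injective on all $\kbar$-points of $\vg$.

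Finally I would invoke the standard characterization of radicial morphisms: since $V$ and $W$ are of finite type over $\kk$, after base change to $\kbar$, Propositions~1.8.4 and~1.8.7.1 of \cite{ag:egaiva} give that $\theta$ is radicial if and only if it is injective on $\FF$-points for every field $\FF$, equivalently if and only if it is injective on $\kbar$-points alone. Combined with the previous step, this proves the theorem. Note that reductivity is used only for the implication ``geometric separating algebra $\Rightarrow$ radicial'', through the surjectivity of $\pi$; the reverse implication holds for arbitrary $G$, since injectivity on all $\kbar$-points trivially implies injectivity on those in the image of $\vbar$. I expect the main obstacle to be the careful bookkeeping of the first two steps — rigorously identifying ``$h(u)=h(v)$ for all $h$'' with equality of images under $\theta$, and making sure that the mere set-theoretic surjectivity of $\pi$ genuinely upgrades to surjectivity on $\kbar$-points via the closed-point/Nullstellensatz argument.
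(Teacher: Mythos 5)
Your proof is correct and follows essentially the same route as the paper's: reduce the separating condition to injectivity of $\theta$ on the $\kbar$-points of $\vg$ coming from $\vbar$, use surjectivity of $\pi$ (from reductivity) to see that these are all the $\kbar$-points, and conclude via the EGA characterization of radicial morphisms. Your closed-point/Nullstellensatz argument merely supplies the detail behind the paper's bare assertion that every map $\spec(\kbar)\rightarrow\vg$ factors through $V$.
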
 

\begin{rem} \label{samedim} It follows from Theorem \ref{SepDefn2} that if we assume $G$ is reductive, then finitely generated geometric separating algebras have the same dimension as the ring of invariants.
\end{rem}


\section{ Polynomial Geometric Separating Algebras}\label{poly}

In this section, we prove Theorem \ref{polysepalg->refngr} and Corollary~\ref{polysepalg<->refngr}, and then we provide non-trivial examples where  there is a geometric separating algebra which is a polynomial ring. The following concrete description of the separating scheme for finite groups is a key element in our proof:

\begin{prop}\label{SG}
If $G$ is a finite group,  then the separating scheme is a union of $|G|$ linear subspaces, each of dimension $n$. There is a natural correspondence between these linear spaces and the elements of $G$. Moreover, if $H_\sigma$ and $H_\tau$ denote the subspaces corresponding to the elements $\sigma$ and $\tau$ of $G$, respectively, then the dimension of the intersection $H_\sigma\cap H_\tau$ is equal to the dimension of the subspace fixed by $\tau^{-1}\sigma$ in $V$.
\begin{proof}

 For each $\sigma\in G$, let $H_\sigma$ be the graph of $\sigma$, that is
 \[H_\sigma=\{(u,\sigma\cdot u)\in V\times V \mid u\in V\}.\]
The linear space $H_\sigma$ has dimension $n$. For elements $\sigma$ and $\tau$ of $G$, the intersection $H_\sigma\cap H_\tau$ is 
 $\{(u,v)\in V\times V \mid u\in V\mathrm{~and~} v=\sigma\cdot u=\tau\cdot u \}.$
Hence, $H_\sigma\cap H_\tau$ is isomorphic to the fixed space of $\tau^{-1}\sigma$. Next, we show that $$\s=\bigcup_{\sigma\in G}H_\sigma.$$
For each $\sigma\in G$, $H_\sigma$ is given as a closed subscheme of $V\times V$ by the ideal $(f\otimes1-1\otimes\sigma^{-1}f\mid f\in\kv)$. Thus, in algebraic terms,  we want to show that
\[ \frac{\kv\otimes_\kk\kv}{\sqrt{(\delta(\kvg))}}=\frac{\kv\otimes_\kk\kv}{\bigcap_{\sigma\in G}(f\otimes1-1\otimes\sigma^{-1}f\mid f\in\kv)}.\]
As $G$ is finite, the ring of invariants separates orbits in $\vbar$ (Lemma~2.1 in \cite{jd-gk-dw:psi}), thus for $u$ and $v$ in $\vbar$, $f(u)=f(v)$, for all $f$ in $\kvg$, if and only if there exists $\sigma$ in $G$ such that $u=\sigma v$. In other words, $\delta(\kvg)\subset \ii_{\vbar^2}(u,v)\cap(\kv\otimes_\kk\kv),$
if and only if
$$\bigcap_{\sigma\in G}(f\otimes1-1\otimes\sigma^{-1}f\mid f\in\kv)\subset  \ii_{\vbar^2}(u,v)\cap(\kv\otimes_\kk\kv).$$
Therefore, $\sqrt{(\delta(\kvg))}=\bigcap_{\sigma\in G}(f\otimes1-1\otimes\sigma^{-1}f\mid f\in\kv)$.\end{proof}\end{prop}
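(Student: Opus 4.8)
The plan is to establish the two asserted facts — the structure of the graphs $H_\sigma$ and their intersections, and the identification $\s = \bigcup_{\sigma \in G} H_\sigma$ — more or less independently, since the first is a routine linear-algebra computation and the second is where the real content lies.

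First I would record the easy part. For each $\sigma \in G$, set $H_\sigma = \{(u, \sigma \cdot u) : u \in V\}$; since $u \mapsto (u, \sigma \cdot u)$ is an injective linear map $V \to V \times V$, each $H_\sigma$ is a linear subspace of dimension $n$. For $\sigma, \tau \in G$, a point $(u,v)$ lies in $H_\sigma \cap H_\tau$ exactly when $v = \sigma \cdot u = \tau \cdot u$, i.e. when $u$ is fixed by $\tau^{-1}\sigma$ and $v$ is determined by $u$; hence $H_\sigma \cap H_\tau$ is linearly isomorphic (via the first projection) to $V^{\tau^{-1}\sigma}$, the fixed space of $\tau^{-1}\sigma$, and its dimension is $\dim V^{\tau^{-1}\sigma}$ as claimed. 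This also makes the ``natural correspondence'' between the $H_\sigma$ and the elements of $G$ explicit.

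The substantive step is to show $\s = \bigcup_{\sigma \in G} H_\sigma$ as schemes (both sides reduced). By Theorem~\ref{SepDefn}, $\s = \spec\bigl((\kv \otimes_\kk \kv)/\sqrt{\delta(\kvg)}\bigr)$, so it suffices to prove the ideal identity
\[
\sqrt{\delta(\kvg)} \;=\; \bigcap_{\sigma \in G} (f \otimes 1 - 1 \otimes \sigma^{-1} f \mid f \in \kv),
\]
since the right-hand ideal is radical (it is an intersection of prime ideals, each cutting out the linear space $H_\sigma$) and visibly defines $\bigcup_\sigma H_\sigma$. As in the proof of Theorem~\ref{SepDefn}, both ideals are radical, and $\kv \otimes_\kk \kv$ is a finitely generated $\kk$-algebra, so it is enough to check that the two ideals are contained in exactly the same maximal ideals; and every maximal ideal of $\kv \otimes_\kk \kv$ has the form $\ii_{\vbar^2}(u,v) \cap (\kv \otimes_\kk \kv)$ for some $(u,v) \in \vbar \times \vbar$. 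Here I would invoke the fact that for a finite group the ring of invariants separates orbits over $\kbar$ (Lemma~2.1 of \cite{jd-gk-dw:psi}): for $u, v \in \vbar$, $f(u) = f(v)$ for all $f \in \kvg$ if and only if $v = \sigma \cdot u$ for some $\sigma \in G$. Translating, $\delta(\kvg) \subset \ii_{\vbar^2}(u,v)$ iff $v = \sigma \cdot u$ for some $\sigma$, iff $(u,v) \in H_\sigma$ for some $\sigma$, iff $\bigcap_\sigma (f \otimes 1 - 1 \otimes \sigma^{-1}f) \subset \ii_{\vbar^2}(u,v)$. Taking intersections of maximal ideals on both sides gives the desired equality of radicals, and hence $\s = \bigcup_\sigma H_\sigma$.

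The main obstacle is not any single deep fact but making sure the passage between the scheme-theoretic statement and the pointwise orbit-separation statement is airtight: one must be careful that the maximal ideals of $\kv \otimes_\kk \kv$ really are all of the stated form (this uses that they correspond to Galois orbits of $\kbar$-points, as noted in the proof of Theorem~\ref{SepDefn}), and that ``$\delta(\kvg) \subset \ii_{\vbar^2}(u,v)$'' genuinely unwinds to ``$f(u) = f(v)$ for all invariants $f$.'' Once orbit-separation for finite groups over $\kbar$ is granted, the rest is bookkeeping. The dimension count — $|G|$ linear spaces each of dimension $n$ — then follows immediately from the description of the components.
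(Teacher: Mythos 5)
Your proposal is correct and follows essentially the same route as the paper: the same graph description of the components $H_\sigma$ and their intersections, the same reduction to the ideal identity $\sqrt{\delta(\kvg)}=\bigcap_{\sigma\in G}(f\otimes1-1\otimes\sigma^{-1}f\mid f\in\kv)$, and the same verification via orbit separation over $\kbar$ (Lemma~2.1 of \cite{jd-gk-dw:psi}) together with the description of maximal ideals of $\kv\otimes_\kk\kv$ and the Jacobson property. Your explicit remark that the right-hand ideal is radical because it is an intersection of primes is a small clarification the paper leaves implicit, but the argument is the same.
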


We may now prove the two main results of this section.

\begin{proof}[Proof of Theorem \ref{polysepalg->refngr}]
Suppose a separating algebra $A$ is a polynomial ring. By Remark \ref{samedim}, $A$ is $n$-dimensional, thus $A$ is generated by $n$ elements. It follows that  the ideal $(\delta(A))$ is also generated by $n$ elements. Therefore, $V\times_WV$ is a complete intersection, and in particular, it is Cohen-Macaulay. As $V\times_WV$ is Noetherian, Hartshorne's Connectedness Theorem (Corollary 2.4 in \cite{rh:cic}) implies that $V\times_WV$ is connected in codimension 1, and thus, so is $\s=(V\times_WV)_\red$. 

Consider the irreducible components $H_{1}$ and $H_\sigma$ of $\s$ corresponding to the identity and an arbitrary element $\sigma$ of $G$, respectively. As $\s$ is connected in codimension 1, there is a sequence of irreducible components
$$H_{1}=H_{\sigma_0},\cdots,H_{\sigma_r}=H_\sigma,$$
such that $H_{\sigma_i}\cap H_{\sigma_{i+1}}$ has codimension 1. By Proposition \ref{SG},   $\sigma_i^{-1}\sigma_{i+1}$ fixes a subspace of codimension~1, and so it acts as a reflection on $V$. Thus,
$\sigma=1^{-1}\sigma=\sigma_0^{-1}\sigma_r=(\sigma_0^{-1}\sigma_1)(\sigma_1^{-1}\sigma_2)\cdots(\sigma_{r-1}^{-1}\sigma_r)$
is a product of  reflections on $V$. Therefore, the action of $G$ on $V$ is generated by reflections.\end{proof}

\begin{proof}[Proof of Theorem \ref{polysepalg<->refngr}] One direction is given by Theorem \ref{polysepalg->refngr}, and as the ring of invariants is a geometric separating algebra, the other is an immediate consequence of the part of the result of Shephard and Todd \cite{gcs-jat:furg}, Chevalley \cite{cc:ifggr}, Serre, and Clark and Ewing \cite{ac-je:trpacr} which establishes that reflection groups have polynomial ring of invariants.\end{proof}

 The following example shows that it is possible for a geometric separating algebra to be polynomial even if the ring of invariants is not, showing that Theorem \ref{polysepalg->refngr} is stronger than Serre's result.
 
 \begin{eg}\label{MyEg}
Let $\kk$ be a field of characteristic $p$, containing a root $z$ of the irreducible polynomial $Z^p-Z-1$. Let
\renewcommand{\arraystretch}{0.8}
\renewcommand{\arraycolsep}{2pt}
\[G={
			\left\langle
			\left( \begin{array}{cccc}
			1 & 0 & 0 & 0\\
			0 & 1 & 0 & 0\\
			0 & 0 & 1 & 0\\
			0 & 0 & 1 & 1\end{array}\right),
			\left( \begin{array}{cccc}
			1 & 0 & 0 & 0\\
			1 & 1 & 0 & 0\\
			0 & 0 & 1 & 0\\
			0 & 0 & 0 & 1\end{array}\right),
			\left( \begin{array}{cccc}
			1 & 0 & 0 & 0\\
			z & 1 & 0 & 0\\
			0 & 0 & 1 & 0\\
			1 & 0 & 0 & 1\end{array}\right) \right\rangle}.\]

Let $\{x_1, y_1, x_2, y_2\}$ be the dual basis for $V^*$. Using Proposition~3.1 of \cite{heac-iph:ricpgoffp} (see \cite{ed:si} for details) we obtain that the ring of invariants of $G$ is generated minimally by $x_1$, $x_2$, $M_1$, $M_2$, and $h$, where
\[\begin{array}{lcl}
M_1&=&(y_1^p-x_1^{p-1}y_1)^p-(x_1^p)^{p-1}(y_1^p-x_1^{p-1}y_1),\\ 
M_2&=&(y_2^p-x_2^{p-1}y_2)^p-(x_1^{p}-x_2^{p-1}x_1)^{p-1}(y_2^p-x_2^{p-1}y_2),\\ 
h&=&(x_1^{p-1}-x_2^{p-1})(y_1^p-x_1^{p-1}y_1)-x_1^{p-1}(y_2^p-x_2^{p-1}y_2).\\ 
\end{array}\]
Thus, the ring of invariants $\kvg$ is a hypersurface, and  a generating relation is given by
\[h^p-(x_1^{p-1}-x_2^{p-1})^pM_1+x_1^{p^2-p}M_2-(x_1^p(x_1^{p-1}-x_2^{p-1}))^{p-1}h=0.\]
We can rewrite this relation as
\[h^p= (x_1^{p-1}-x_2^{p-1})^pM_1-x_1^{p^2-p}(M_2-(x_1^{p-1}-x_2^{p-1})^{p-1}h).\]
As $h$ and $h^p$ separate the same points, $$S=\{x_1,x_2,M_1,M_2-(x_1^{p-1}-x_2^{p-1})^{p-1}h\}$$ is a geometric separating set which generates a polynomial geometric separating algebra.
\done
\end{eg}

In \cite{ed:si} more examples are  discussed, including another similar infinite family. As required by Theorem \ref{polysepalg->refngr},  all the group actions involved  are generated by reflections. But they have more in common: they act as rigid groups, i.e., the actions of all  their isotropy subgroups are generated by reflections. We do not expect polynomial geometric separating algebras to exist for all rigid groups. On the other hand, we suspect that rigid groups are the only ones for which polynomial separating algebras can exist.


                      

\section{Complete Intersection Geometric Separating Algebras}\label{ci}

In this section, we prove Theorem \ref{CISepAlg->bir}, and then present an example. Our proof extends the argument used by Kac and Watanabe to prove their Theorem A in \cite{vk-kw:flgrici}, and it exploits the second geometric formulation of the notion of geometric separating algebra. The statement of Theorem \ref{CISepAlg->bir} concerns graded separating algebras. Assuming that a geometric separating algebra $A$ is graded imposes a very close relationship with the ring of invariants $\kvg$:

\begin{prop}\label{inj->finite}
Let $A\subset\kvg$ be a graded subalgebra. If the map of schemes $\theta:\vg\rightarrow W=\spec(A)$ is injective, then the extension $A\subset\kvg$ is integral.
\begin{proof}
Let $A_+$ and $\kvgplus$ denote the maximal graded ideals of $A$ and $\kvg$, respectively. If $\pp$ is a proper prime ideal of $\kvg$ containing $A_+\kvg$, then 
\[ A_+\subset A_+\kvg\cap A\subset\pp\cap A\subset A.\]
As $A_+$ is a maximal ideal, $\pp\cap A=A_+$. 

On the other hand, $A_+=\kvgplus\cap A.$ Thus, the injectivity of $\theta$ implies $\pp=\kvgplus$, and the radical of ${A_+\kvg}$ in $\kvg$ is $\kvgplus$. It follows that ${\kvg}/{A_+\kvg}$ has Krull dimension $0$, i.e., it is a finite dimensional $\kk$-vector space. By the graded version of Nakayama's Lemma (Lemma~3.5.1 in \cite{hd-gk:cit}), $\kvg$ is a finite $A$-module, and so the extension $A\subset\kvg$ is integral. 
\end{proof}\end{prop}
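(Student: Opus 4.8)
The plan is to exploit the graded structure to reduce the integrality statement to a dimension-zero (finite-dimensional) claim about the quotient $\kvg/A_+\kvg$, and then invoke the graded Nakayama Lemma. Since $A$ and $\kvg$ are graded with $A_0=\kvg_0=\kk$, the relevant maximal ideals to track are the graded maximal ideals $A_+$ and $\kvgplus$. First I would show that every proper prime $\pp$ of $\kvg$ containing $A_+\kvg$ contracts to $A_+$ in $A$: indeed $A_+ \subset A_+\kvg \cap A \subset \pp\cap A$, and since $A_+$ is a maximal ideal of $A$, the contraction $\pp\cap A$ equals $A_+$.

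Next, the key point is to promote this to the statement that $\pp$ must itself equal $\kvgplus$, and this is where injectivity of $\theta$ enters. Observe that $A_+ = \kvgplus\cap A$ (the contraction of the graded maximal ideal of $\kvg$ is the graded maximal ideal of $A$, because both are the ideals of elements of positive degree). So $\pp$ and $\kvgplus$ are two primes of $\kvg$ lying over the same prime $A_+$ of $A$; the injectivity of $\theta$ on points forces $\pp = \kvgplus$. Hence $\kvgplus$ is the only prime containing $A_+\kvg$, so $\sqrt{A_+\kvg} = \kvgplus$, which means $\kvg/A_+\kvg$ has Krull dimension $0$. As a finitely generated graded $\kk$-algebra of dimension zero, $\kvg/A_+\kvg$ is a finite-dimensional $\kk$-vector space.

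Finally I would apply the graded Nakayama Lemma (Lemma 3.5.1 in \cite{hd-gk:cit}): since $\kvg$ is a graded $A$-module and $\kvg/A_+\kvg$ is finite-dimensional over $\kk = A/A_+$, a homogeneous lift of a $\kk$-basis generates $\kvg$ as an $A$-module. Thus $\kvg$ is a finite $A$-module, and a finitely generated module algebra extension is integral, so $A\subset\kvg$ is integral as claimed.

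The only subtlety — and the step I would be most careful about — is the passage "$\pp$ and $\kvgplus$ lie over $A_+$, therefore $\pp=\kvgplus$." One must be sure that $\theta$ being injective genuinely applies here: a priori $\pp$ could fail to be a $\kbar$-point or even a closed point, so the cleanest route is to note that both $\pp$ and $\kvgplus$ are in the fiber of $\theta$ over the point $A_+\in W$, and injectivity of $\theta$ as a map of schemes means this fiber has at most one point, giving $\pp=\kvgplus$ directly. Everything else is the standard graded-ring bookkeeping.
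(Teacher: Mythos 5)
Your proof is correct and follows essentially the same route as the paper's: contract primes containing $A_+\kvg$ to $A_+$, use injectivity of $\theta$ to force $\pp=\kvgplus$, deduce that $\kvg/A_+\kvg$ is zero-dimensional hence finite-dimensional over $\kk$, and conclude by the graded Nakayama Lemma. Your extra care about what injectivity of $\theta$ means for the (possibly non-closed) prime $\pp$ is a welcome clarification of a step the paper leaves implicit, but it is not a different argument.
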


\begin{cor}\label{sep-int}
If the action of $G$ on $V$ is reductive, and if $A\subset\kvg$ is a graded geometric separating algebra, then the extension $A\subset\kvg$ is integral.
\begin{proof} By Theorem \ref{SepDefn2}, the morphism of schemes $\theta:\vg\rightarrow W$ is radicial.  As radicial morphism of schemes are injective (Proposition~3.5.8 in \cite{ag:egai}),  and as $A$ is assumed to be graded,  the corollary follows directly from Proposition \ref{inj->finite}.\end{proof}\end{cor}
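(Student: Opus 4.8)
The plan is to derive integrality by chaining together the second geometric formulation of separation (Theorem \ref{SepDefn2}) with the graded injectivity criterion of Proposition \ref{inj->finite}. The two hypotheses of the corollary are precisely what these results demand: reductivity is the input to Theorem \ref{SepDefn2}, and gradedness is the input to Proposition \ref{inj->finite}. So the whole argument amounts to applying these two results in succession, with the output of the first feeding the hypothesis of the second.

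First I would use reductivity of $G$ together with the assumption that $A$ is a geometric separating algebra to invoke Theorem \ref{SepDefn2}, concluding that the morphism $\theta:\vg\rightarrow W=\spec(A)$ attached to the inclusion $A\subset\kvg$ is radicial. This is the step converting the geometric separating condition into a property of $\theta$, and it is exactly where reductivity is needed. Next I would extract the weaker fact that $\theta$ is injective on underlying topological spaces: by definition a radicial morphism is injective on $\FF$-points for every field $\FF$, and in particular injective as a map of sets (Proposition 3.5.8 in \cite{ag:egai}). Since $A$ is graded and $\theta$ is now known to be injective, Proposition \ref{inj->finite} applies verbatim and yields that the extension $A\subset\kvg$ is integral.

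I do not anticipate a genuine obstacle, as the substantive content lives in Theorem \ref{SepDefn2} and Proposition \ref{inj->finite}, and this corollary is essentially their conjunction. The one point deserving a moment's care is to confirm that the injectivity supplied by radicialness (injectivity on points, over every field) is exactly the injectivity hypothesis that Proposition \ref{inj->finite} requires of $\theta$, which it is; everything else is a direct citation.
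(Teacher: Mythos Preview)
Your proposal is correct and follows essentially the same approach as the paper: invoke Theorem~\ref{SepDefn2} (using reductivity) to get that $\theta$ is radicial, pass from radicial to injective via Proposition~3.5.8 in \cite{ag:egai}, and then apply Proposition~\ref{inj->finite} (using gradedness) to conclude integrality. There is nothing to add.
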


We can now prove the main result of this section:

\begin{proof}[Proof of Theorem \ref{CISepAlg->bir}]
Without loss of generality, we may assume that the base field is algebraically closed. Indeed, if $A$ is a complete intersection graded geometric separating algebra inside of $\kvg$, then $A\otimes_\kk\kbar$ is a complete intersection and a graded geometric separating algebra inside of $\kbar[\vbar]^G$. Assuming the theorem holds over algebraically closed fields, it follows that $G$ is generated by bireflection on $\vbar$. Thus, the action of $G$ on $V$ is also generated by bireflections. 

Since $G$ is finite, it is reductive, and so Theorem \ref{SepDefn2} implies that  $\theta$  is a radicial morphism. As $A$ is graded,  Corollary \ref{sep-int} implies $\theta$ is finite. Finally, since $\theta$ is dominant and finite it is also surjective.

Now, let $\kvhat$, $\kvghat$\!\!, and $\ahat$ be the completions of $\kv$, $\kvg$\!\!, and $A$ at their maximal graded ideal $\kv_+$, $\kvgplus$, and $A_+$, respectively. A scheme is \emph{simply connected} if and only if there are no nontrivial \'etale coverings (\cite{rh:ag}, Example 2.5.3). As complete local rings satisfy Hensel's Lemma, by Theorem 5.2 in \cite{bi:glsmca}, the affine schemes corresponding to the completions are simply connected.  The $G$-action on $\kv$ extends to a $G$-action on $\kvhat$, and $\kvghat=({\kvhat})^G$. Thus $\spec(\kvghat)=\vhatg$, and the finite morphism $\pi$ lifts to the quotient morphism $\pihat:\vhat\rightarrow\vhatg$, which remains finite. Since $\kvghat=\kvg\otimes_A\ahat$ (Theorem 9.3A in \cite{rh:ag}), taking the completion corresponds to doing a base change. Hence, $\theta$ lifts to a morphism $\widehat{\theta}$ which is surjective, radicial, and finite, since all three properties are preserved by base changes: see Propositions 3.5.2 and 3.5.7 in \cite{ag:egai}, and 6.1.5 in \cite{ag:egaii}, respectively.

For $\sigma$ in $G$ we let $\vhat^\sigma$ denote the subscheme of fixed points of $\sigma$ on $\vhat$. Let $L$ be the union of all the $\vhat^\sigma$'s with codimension at least 3, and put $M=\pihat(L)$, and $N=\widehat{\theta}(M)$.  Since $W$ is a complete intersection, Proposition 3.2 of \cite{rh:cic} implies  $\widehat{W}=\spec(\ahat)$ is also a complete intersection. Since $\pihat$ and $\widehat{\theta}$ are finite, $N$ has codimension 3 in $\widehat{W}$. Hence, by Lemma 1 from \cite{vk-kw:flgrici},  $\widehat{W}\setminus N$ is simply connected. As the restriction of $\widehat{\theta}$ to $\vhatg\setminus M$ is radicial, surjective, and finite, by Theorem 4.10 of \cite{ag:sga1b}, it follows that $(\vhatg)\setminus M$ is also simply connected.

Furthermore, $X=\vhat\setminus L$ is an integral scheme with the induced $G$-action, and $(\vhatg)\setminus N=X/\!\!/G$. Since $X/\!\!/G$ is simply connected,  Lemma 2 from \cite{vk-kw:flgrici} implies that the group $G$ is generated by the set $\{G_x \mid x\in X=\vhat\setminus L\}$. But by the definition of $\vhat\setminus L$, an element $\sigma$ belongs to $G_x$ for some $x\in \vhat\setminus L$  if and only if $\codim( \vhat^\sigma) \leq 2$. Hence, $G$ is generated by bireflections. 
\end{proof}

Derksen's example of a representation for which no geometric separating algebra is a hypersurface follows:

\begin{eg}[Harm Derksen]\label{DerksenEg}
Let $t$ in $G=\CC^*$ act on the polynomial ring $\CC[x_1,x_2,x_3,y_1,y_2]$, as
\renewcommand{\arraystretch}{0.9}
\renewcommand{\arraycolsep}{3pt}
\[{\left(\begin{array}{ccccc}
t & 0 & 0 & 0      & 0\\
0 & t & 0 & 0      & 0\\
0 & 0 & t & 0      & 0\\
0 & 0 & 0 & t^{-1} & 0\\
0 & 0 & 0 & 0      & t^{-1}\end{array}\right).}\]
Monomials are sent to scalar multiples of themselves, and so the ring of invariants is generated by monomials. In fact,
\[\CC[V]^{\CC^*}=\CC[x_1y_1,x_2y_1,x_3y_1,x_1y_2,x_2y_2,x_3y_2].\]
The dimension of $\CC[V]^{\CC^*}$ is equal to its transcendence degree (i.e., the maximal number of algebraically independent elements). The set $\{x_1y_1,x_3y_1,x_1y_2,x_2y_2\}$ forms a transcendence basis for $\CC[V]^{\CC^*}$. Indeed, they are clearly algebraically independent, and  the relations
$(x_1y_1)(x_3y_2)=(x_3y_1)(x_1y_2)$ and $(x_2y_2)(x_1y_1)=(x_2y_1)(x_1y_2)$
 give  $x_3y_2$ and $x_2y_2$ as roots of polynomials in the other generators. Thus, $\CC[V]^{\CC^*}$ has dimension 4.

As the group is reductive, by Remark \ref{samedim}, geometric separating algebras have dimension 4. Thus, a geometric separating algebra is a hypersurface if it is generated by 5 elements. We will prove that there are no geometric separating sets of 5 elements.

Suppose, by way of contradiction, that $f_1,f_2,f_3,f_4,f_5$ is a geometric separating set. As the $f_i$'s are invariant, we can write:
\[f_i=F_i(x_1y_1,x_2y_1,x_3y_1,x_1y_2,x_2y_2,x_3y_2),\]
where each $F_i$ is a polynomial in $\CC[z_1,z_2,z_3,z_4,z_5,z_6]$. The ideal generated by the 5 polynomials 
$\label{F} F_i(z_1,z_2,z_3,0,0,0)-F_i(0,0,0,z_4,z_5,z_6)$, for $i=1,\ldots,5$,
corresponds to a subvariety of $\CC^6$ which is either empty or has dimension at least 1. As the point $(0,0,0,0,0,0)$ is a common zero, there are infinitely many solutions, in particular there is a non-zero solution $(a,b,c,d,e,f)$. If
$u=(a,b,c,1,0)$, and $v=(d,e,f,0,1)$,
then for all $i=1,\ldots,5$
\[f_i(u)=F_i(a,b,c,0,0,0)=F_i(0,0,0,d,e,f)=f_i(v),\]
that is, the $f_i$'s do not separate $u$ and $v$. We have, however, 
\[\begin{array}{cc}
x_1y_1(u)=a, & x_1y_1(v)=0,\\
x_2y_1(u)=b, & x_2y_1(v)=0,\\
x_3y_1(u)=c, & x_3y_1(v)=0,\\
x_1y_2(u)=0, & x_1y_2(v)=d,\\
x_2y_2(u)=0, & x_2y_2(v)=e,\\
x_3y_2(u)=0, & x_3y_2(v)=f,\end{array}\]
and as $(a,b,c,d,e,f)$ is nonzero, this is a contradiction. We conclude that no geometric separating algebra is a hypersurface.\done\end{eg}


\bibliography{reference}

\end{document}